
\documentclass[letterpaper, 10 pt, conference]{ieeeconf}  

\IEEEoverridecommandlockouts                              

\overrideIEEEmargins                                      




\input{irom.sty}
\newtheorem{theorem}{Theorem}[section]

\usepackage{etoolbox}
\usepackage{mathtools}
\usepackage{eucal}
\usepackage{ftnright}
\newtoggle{extended}
\toggletrue{extended} 

\usepackage[capitalise]{cleveref}
\crefformat{equation}{(#2#1#3)}
\crefrangeformat{equation}{(#3#1#4) to~(#5#2#6)}

\newcommand{\DP}{\mathcal{DP}}

\title{\LARGE \bf
Task-Driven Estimation and Control via Information Bottlenecks
}

\author{Vincent Pacelli and Anirudha Majumdar
\thanks{This work was supported by the National Science Foundation (NSF) [IIS-1755038] and a Google Faculty Research Award.}
\thanks{The authors are with the Mechanical and Aerospace Engineering department at Princeton University, NJ, 08540, USA
        {\tt\small \{vpacelli, ani.majumdar\}@princeton.edu}%
}%
}

\iftoggle{extended}{}{}

\begin{document}

\maketitle
\thispagestyle{empty}
\pagestyle{empty}

\begin{abstract}
Our goal is to develop a principled and general algorithmic framework for \emph{task-driven estimation and control} for robotic systems. State-of-the-art approaches for controlling robotic systems typically rely heavily on accurately estimating the full state of the robot (e.g., a running robot might estimate joint angles and velocities, torso state, and position relative to a goal). However, full state representations are often excessively rich for the specific task at hand and can lead to significant computational inefficiency and brittleness to errors in state estimation. In contrast, we present an approach that eschews such rich representations and seeks to create \emph{task-driven representations}. The key technical insight is to leverage the theory of \emph{information bottlenecks} to formalize the notion of a ``task-driven representation" in terms of information theoretic quantities that measure the \emph{minimality} of a representation. We propose novel iterative algorithms for automatically synthesizing (offline) a task-driven representation (given in terms of a set of task-relevant variables (TRVs)) and a performant control policy that is a function of the TRVs. We present \emph{online} algorithms for estimating the TRVs in order to apply the control policy. We demonstrate that our approach results in significant robustness to \emph{unmodeled} measurement uncertainty both theoretically and via thorough simulation experiments including a spring-loaded inverted pendulum running to a goal location.

\end{abstract}

\section{Introduction}

State-of-the-art techniques for controlling robotic systems typically rely heavily on accurately estimating the full state of the system and maintaining rich geometric representations of their environment. For example, a common approach to navigation is to build a dense occupancy map produced by scanning the environment and to use this map for planning and control. Similarly, control for walking or running robots typically involves estimating the full state of the robot (e.g., joint angles, velocities, and position relative to a goal location). However, such representations are often overly detailed when compared to a \emph{task-driven representation}.

One example of a task-driven representation is the ``gaze heuristic'' from cognitive psychology \cite{Gigerenzer07, McLeod03, Shaffer04}. When attempting to catch a ball, an agent can estimate the ball's position and velocity, model how it will evolve in conjunction with environmental factors like wind, integrate the pertinent differential equations, and plan a trajectory in order to arrive at the ball's final location. In contrast, cognitive psychology studies have shown that humans use a dramatically simpler strategy that entails maintaining the angle that the human's gaze makes with the ball at a constant value. This method reduces a number of hard-to-monitor variables (e.g., wind speed) into a single easily-estimated variable. Modulating this variable alone results in accomplishing the task.

\begin{figure}[t]
\begin{center}
\includegraphics[width=0.99\columnwidth]{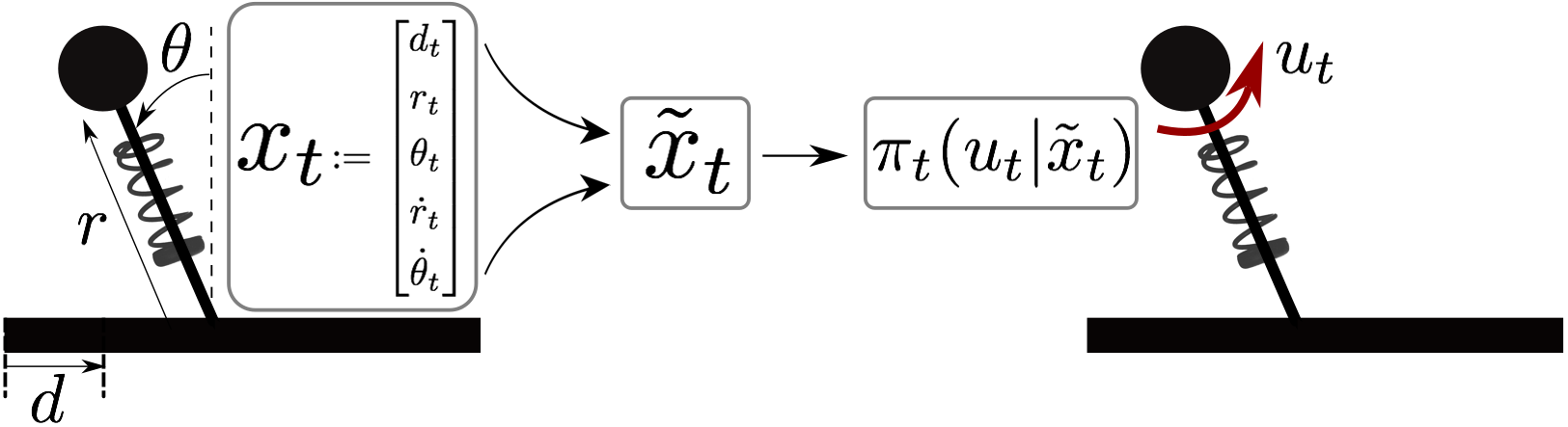}
\end{center} 
\vspace{-14pt}
\caption{A schematic of our technical approach. We seek to synthesize (offline) a minimalistic set of task-relevant variables (TRVs) $\tilde{x}_t$ that create a bottleneck between the full state $x_t$ and the control input $u_t$. These TRVs are estimated online in order to apply the policy $\pi_t$. 
We demonstrate our approach on a spring-loaded inverted pendulum model whose goal is to run to a target location. Our approach automatically synthesizes a \emph{one-dimensional} TRV $\tilde{x}_t$ sufficient for achieving this task. \label{fig:slip}}
\vspace{-18pt}
\end{figure}

The gaze heuristic example highlights the two primary advantages of using a task-driven representation. First, a control policy that uses such a representation is more efficient to employ online since fewer variables need to be estimated. Second, since only a few prominent variables need to be estimated, fewer sources of measurement uncertainty result in a more robust policy. While one can sometimes manually design task-driven representations for a given task, we currently lack a principled theoretical and algorithmic framework for \emph{automatically synthesizing} such representations. The goal of this paper is to develop precisely such an algorithmic approach. 

\textbf{Statement of Contributions.} The main technical contribution of this paper is to formulate the synthesis of task-driven representations as an optimization problem using information bottleneck theory \cite{Tishby99}. We present \emph{offline} algorithms that encode the full state of the system into a set of \emph{task-relevant variables (TRVs)} and simultaneously identify a performant policy (restricted to be a function of the TRVs) using novel iterative algorithms that exploit the structure of this optimization problem in a number of dynamical settings including discrete-state, linear-Gaussian, and nonlinear systems.
We present \emph{online} algorithms for estimating the TRVs in order to apply the control policy. We demonstrate that our approach yields policies that are robust to \emph{unmodeled} measurement uncertainty both theoretically (using results from the theory of risk metrics) and in a number of simulation experiments including running using a spring-loaded inverted pendulum (SLIP) model (Fig.\ref{fig:slip}).

\label{sec:intro}

\subsection{Related Work}

By far the most common approach in practice for controlling robotic systems with nonlinear dynamics and partially observable state is to \emph{independently} design an estimator for the \emph{full} state (e.g., a Kalman filter \cite{Thrun05}) and a controller that assumes perfect state information (e.g., designed using robust control techniques such as $H_\infty$ control \cite{Francis87}, sliding mode control \cite{Edwards98, Slotine91}, passivity-based control \cite{Ortega13, Khalil96}, or Lyapunov-based control \cite{Slotine91, Khalil96}). While this strategy is optimal for Linear-Quadratic-Gaussian (LQG) problems due to the \emph{separation principle} \cite{Anderson07}, it can produce a brittle system due to errors in the state estimate for nonlinear systems (since the separation principle does not generally hold in this setting). We demonstrate that our task-driven approach affords significant robustness when compared to approaches that perform full state estimation and control assuming the separation principle (see Section \ref{sec:examples} for numerical examples). Moreover, in contrast to traditional robust estimation and control techniques, our approach does not rely on explicit models of measurement uncertainty. We demonstrate that robustness can be achieved implicitly as a \emph{by-product} of task-driven representations.


Historically, the work on designing \emph{information constrained} controllers has been pursued within the networked control theory literature \cite{Nair07, Matveev09, Tanaka17}.
Recently, the optimal co-design of data-efficient sensors and performant controllers has also been explored beyond network applications. One set of approaches --- inspired by the cognitive psychology concept of \emph{bounded rationality} \cite{Simon72, Gigerenzer02} --- is to limit the information content of a control policy measured with respect to a default stochastic policy \cite{Tishby11, Grau16, Todorov09, Braun11, Williams17}. Another set of examples comes from the sensor selection problem in robotics, which involves selecting a minimal number of sensors or features to use for estimating the robot's state \cite{Williams07, Tzoumas18, Carlone17}. 
While the work highlighted above shares our goal of designing ``minimalistic'' (i.e., informationally frugal) controllers, our approach here is fundamentally different. In particular, our goal is to design task-driven \emph{representations} that form abstractions which are sufficient for the purpose of control. Online estimation and control is performed purely based on this representation without resorting to estimating the full state of the system (in contrast to the work highlighted above, which either assumes access to the full state or designs estimators for it). 

A number of previous authors consider the construction of minimal-information representations. In information theory, this approach is typically referred to as the Information Bottleneck (IB) \cite{Tishby99}. Recently, these ideas have been co-opted for designing control policies. In \cite{Achille18}, a learning-based approach is suggested to find minimal-information state representations and control policies which use them. Our work differs in that we provide \emph{analytic} (i.e., model-based) methods for finding such representations and policies and we explicitly characterize the resulting robustness. Another branch of work considers the construction of LQG policies that achieve a performance goal while minimizing an information-theoretic quantity such as the mutual information between inputs and outputs \cite{Fox16, Fox16a} or Massey's directed information \cite{Massey90, Tanaka18}. In contrast to these works, our derivation handles nonlinear systems and also presents robustness results for the resulting controllers, which have not been discussed to our knowledge in existing literature.

The work on \emph{actionable information} in vision \cite{Soatto13, Soatto11} 
attempts to find invariant and task-relevant representations for visual tasks in the form of \emph{minimal complete representations} (minimal sufficient statistics of a full or ``complete'' representation). While highly ambitious in scope, this work has largely been devoted to studying visual decision tasks (e.g., recognition, detection, categorization). The algorithmic approach taken in \cite{Soatto13, Soatto11} is thus tied to the specifics of visual problems (e.g., designing visual feature detectors that are invariant to nuisance factors such as contrast, scale, and translation). Our goals and technical approach here are complementary. We do not specifically target visual decision problems; instead, we seek to develop a general framework that is applicable to a broad range of robotic control problems and allows us to \emph{automatically} synthesize task-driven representations  (without manually designing feature detectors).

\section{Task-Driven Representations and Controllers}

Our goal in this section is to formally define the notion of a ``task-driven representation" and formulate an optimization problem that automatically synthesizes such representations. We focus on control tasks in this paper and assume that a task is defined in terms of a (partially observable) Markov decision process ((PO)MDP). Let $\smash{x_t \in \X,\ u_t \in \U,\ y_t \in \mathcal{Y}}$ denote the full state of the system, the control input, and the sensor output at time $t$ respectively. Here the state space $\X$, input space $\U$, and output space $\mathcal{Y}$ may either be discrete or continuous. We assume that the dynamics and sensor model of the system are known and given by potentially time-varying conditional distributions $\hspace{-1mm}\smash{p_t(x_{t + 1} |u_t, x_t),\ \sigma_t(y_t | x_t)}\hspace{-1mm}$ respectively. Let $c_0(x_0, u_0), c_1(x_1, u_1), \dots, c_{T - 1}(x_{T - 1}, u_{T - 1}), c_T(x_T)$ be a sequence of cost functions that encode the robot's desired behavior. The robot's goal is to identify a control policy $\pi_t(u_t|y_t)$ that minimizes the expected value of these cost functions when the policy is executed \emph{online}, i.e. minimize $\sum_{t=0}^{T - 1} \EE_{p_t} c_t(x_t, u_t) + \EE_{p_T} c_T(x_T)$. In general, this optimization problem is infinite dimensional and challenging to solve.

The key idea behind our technical approach is to define a principled information-theoretic notion of \emph{minimality} of representations for tasks. Figure \ref{fig:slip} illustrates the main idea for doing this. Here $\smash{\tilde{x}_t \in \widetilde{\mathcal{X}}}$ are \emph{task-relevant variables} (TRVs) that constitute a \emph{representation}; one can think of the representation as a ``sketch'' of the full state that the robot uses for computing control inputs via $\pi_t(u_t | \tilde{x}_t)$. Ideally, such a representation filters out all the information from the state that is not relevant for computing control inputs --- avoiding the introduction of unnecessary estimation error. A minimalistic representation $\tilde{x}_t$ should thus create a \emph{bottleneck} between the full state and the control input. We make this notion precise by leveraging the theory of information bottlenecks \cite{Tishby99} and finding a stochastic mapping $q_t(\tilde{x}_t | x_t)$ that minimizes the \emph{mutual information} between $x_t$ and $\tilde{x}_t$ 
\begin{equation}
\mathbb{I}(x_t; \tilde{x}_t) \coloneqq \KL(p_t(x_t, \tilde{x}_t) \| p_t(x_t) q_t(\tilde{x}_t)),
\end{equation}
while ensuring the policy performs well (i.e., achieves low expected cost). Here, $\KL(\cdot \| \cdot)$ represents the Kullback-Leibler (KL) divergence between two distributions. 
Intuitively, minimizing the mutual information corresponds to designing TRVs $\tilde{x}_t$ that are as independent of the state $x$ as possible. The map $q_t(\tilde{x}_t | x_t)$ is thus ``squeezing out'' all the irrelevant information from the state while maintaining enough information for choosing good control inputs. This representation formalizes our notion of a \emph{task-driven representation}. 

Formally, we pose the problem of finding task-driven representations as the following \emph{offline} optimization problem, which we refer\footnote{Technically, $\OPT$ is a kind of rate-distortion problem, not an information bottleneck problem, as the constraint is not specified using a divergence as a distortion function. However, $q_t(\tilde{x}_t | x_t)$ limits the flow of information from $x_t$ to $u_t$ so we use the term bottleneck as a conceptual aid.} to as $\OPT$:
\begin{flalign*}
&& \underset{p_t, q_t, \pi_t}{\textrm{min}} \hspace*{1cm} \sum_{t=0}^{T} \Big{[} \EE_{p_t} c_t(x_t, u_t)  + \frac{1}{\beta} \mathbb{I}(x_t; \tilde{x}_t) \Big{]}. &&& (\OPT)
\end{flalign*}
We note that the unconstrained problem $\OPT$ is equivalent to a constrained version where the mutual information is minimized subject to a constraint on the expected cost of the full-state MDP and $\beta$ is the corresponding Lagrange multiplier. A heuristic approach for choosing an appropriate value for $\beta$ is discussed in \cref{sec:examples}.

So far, we have limited our discussion to the case where the robot has access to the full state of the system. However, the real benefit of the task-driven perspective is evident in the partially observable setting, where the robot only indirectly observes the state of the system via sensor measurements $y_0, \dots, y_t$, where $y_t \in \mathcal{Y}_t$. We denote the probability of observing a particular measurement in a given state by $\sigma_t(y_t | x_t)$. The prevalent approach for handling such settings is to design an estimator for the full state $x_t$. Our key idea here is to perform estimation for the TRVs $\tilde{x}_t$ instead of $x_t$. Specifically, our overall approach has two phases:

{\bf Offline.} Synthesize the maps $\{q_t, \pi_t\}_{t = 0}^{T - 1}$ by solving $\OPT$.

{\bf Online.} Estimate current TRV $\tilde{x}_t$ using sensor measurements and use this estimate to compute inputs via $\pi_t(u_t | \tilde{x}_t)$.

Perhaps the clearest benefit of our approach is the fact that the representation $\tilde{x}_t$ may be significantly lower-dimensional than the full state of the system; this can lead to significant reductions in online computations. Another advantage of the task-driven approach is robustness to estimation errors. To see this, let $p_t(x_t, \tilde{x}_t, u_t)$ denote the joint distribution over the state, representation, and inputs at time $t$ that results when we apply the policy obtained by solving Problem $\OPT$ in the fully observable setting. Now, let $\tilde{p}_t(x_t, \tilde{x}_t, u_t)$ denote the distribution for the partially observable setting, i.e., when we estimate $\tilde{x}_t$ online using the robot's sensor measurements and use this estimate to compute control inputs.


\begin{theorem}
\label{thm:robustness}
Let $\tilde{p}_t(x_t, \tilde{x}_t, u_t)$ be the distribution resulting from any estimator that satisfies the following condition:
\begin{align}
\label{eq:KL condition}
&\KL(\tilde{p}_t(x_t, \tilde{x}_t, u_t) \| p_t(x_t, \tilde{x}_t, u_t)) \\
&\leq \frac{1}{\beta}\KL(p_t(x_t, \tilde{x}_t) \| p_t(x_t) q_t(\tilde{x}_t)). \nonumber
\end{align}
Then, we have the following upper bound on the total expected cost:
\begin{equation}
\label{eq:cost bound}
\sum_{t=0}^{T} \EE_{\tilde{p}_t} c_t(x_t, u_t) \leq \sum_{t=0}^{T} \Big{[} \rho \big{(} c_t(x_t, u_t) \big{)} + \frac{1}{\beta}\mathbb{I}(x_t; \tilde{x}_t) \Big{]},
\end{equation}
where $\rho$ is the \emph{entropic risk metric} \cite[Example 6.20]{Shapiro09}:
\begin{equation}
\label{eq:entropic risk}
\rho \big{(} c_t(x_t, u_t) \big{)} \coloneqq \log \Big{[} \EE_{p_t} \exp(c_t(x_t, u_t))\Big{]}.
\end{equation}
\end{theorem}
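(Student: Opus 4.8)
The plan is to prove the bound one time step at a time and then sum, with the engine being the convex-duality (Gibbs variational) representation of the entropic risk. Recall the Donsker--Varadhan identity: for any two distributions $q, p$ on a common space and any suitably integrable function $f$,
\[
\log \EE_{p}\big[\exp(f)\big] = \sup_{q}\Big[\EE_{q}[f] - \KL(q\|p)\Big],
\]
so in particular $\EE_{q}[f] \le \log \EE_{p}[\exp(f)] + \KL(q\|p)$ for every such $q$. This is exactly the inequality that converts a change of measure---here from the fully observable $p_t$ to the partially observable $\tilde p_t$---into an entropic-risk term plus a KL penalty.

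First, I would apply this inequality at each fixed $t$ with $f = c_t(x_t,u_t)$ (abbreviated $c_t$), $p = p_t(x_t,\tilde x_t, u_t)$, and $q = \tilde p_t(x_t,\tilde x_t, u_t)$. Since $c_t$ depends only on $(x_t,u_t)$, both its expectation and its log-moment generating function are unchanged whether taken under the joint distributions or their $(x_t,u_t)$-marginals, so marginalization is harmless. This yields
\[
\EE_{\tilde p_t} c_t \le \log \EE_{p_t}\exp(c_t) + \KL(\tilde p_t \| p_t) = \rho(c_t) + \KL(\tilde p_t \| p_t),
\]
where the last equality is just the definition \cref{eq:entropic risk} of $\rho$.

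Second, I would invoke the hypothesis \cref{eq:KL condition}, which bounds the change-of-measure term by $\tfrac{1}{\beta}\KL(p_t(x_t,\tilde x_t)\|p_t(x_t)q_t(\tilde x_t)) = \tfrac{1}{\beta}\mathbb{I}(x_t;\tilde x_t)$. Substituting gives the per-step bound $\EE_{\tilde p_t} c_t \le \rho(c_t) + \tfrac{1}{\beta}\mathbb{I}(x_t;\tilde x_t)$, and summing over $t = 0,\dots,T$ produces exactly \cref{eq:cost bound}.

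The only genuine subtlety---and the step I expect to be the main obstacle---is establishing the duality inequality at the paper's level of generality (continuous or discrete spaces, time-varying costs) and verifying the integrability Donsker--Varadhan requires, i.e., that $\EE_{p_t}\exp(c_t)$ is finite so that $\rho(c_t)$ is well defined; everything after that is substitution and a finite sum. It is worth remarking that this mechanism is precisely why the robustness comes ``for free'': the information term $\tfrac{1}{\beta}\mathbb{I}(x_t;\tilde x_t)$ that $\OPT$ already drives down reappears as the admissible budget for estimation error, so a minimal task-driven representation is automatically a robust one.
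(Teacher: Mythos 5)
Your proposal is correct and follows essentially the same route as the paper's proof: a per-time-step application of the Donsker--Varadhan change-of-measure inequality with $f = c_t$, followed by substitution of the hypothesis \cref{eq:KL condition} and summation over $t$. The only addition is your (reasonable) caveat about integrability of $\exp(c_t)$, which the paper leaves implicit.
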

\vspace{10pt}
\begin{proof} 
\iftoggle{extended}{%
By the well-known Donsker-Varadhan change of measure formula \cite[Theorem 2.3.2]{Gray11}, we have:
\begin{align}
\label{eq:DV ineq}
\EE_{\tilde{p}_t} c_t(x_t, u_t) \leq &\log \Big{[} \EE_{p_t}  \exp(c_t(x_t, u_t)) \Big{]} \nonumber \\
+&\KL(\tilde{p}_t(x_t, \tilde{x}_t, u_t) \| p_t(x_t, \tilde{x}_t, u_t)). 
\end{align}
Then, using condition \eqref{eq:KL condition} and inequality \eqref{eq:DV ineq}, we obtain:
\begin{align}
\EE_{\tilde{p}_t}\ c_t(x_t, u_t) &\leq \log \Big{[} \EE_{p_t} \exp(c_t(x_t, u_t))\Big{]} \nonumber \\
&+\frac{1}{\beta}\KL(p_t(x_t, \tilde{x}_t) \| p_t(x_t) q_t(\tilde{x}_t)) \\
&= \rho \big{(} c_t(x_t, u_t) \big{)} + \frac{1}{\beta}\mathbb{I}(x_t; \tilde{x}_t).
\end{align}
Summing over time gives us the desired result.
}
{
The proof follows from the Donsker-Varadhan change of measure formula \cite[Theorem 2.3.2]{Gray11} and is presented in the extended version of this paper \cite{Pacelli18}.
}
\end{proof}

Intuitively, this theorem shows that \emph{any} estimator for $\tilde{x}_t$ (in the partially observable setting) that results in a distribution $\tilde{p}_t(x_t, \tilde{x}_t, u_t)$ that is ``close enough'' to the distribution $p_t(x_t, \tilde{x}_t, u_t)$ in the fully observable case (i.e., when their KL divergence is less than $\frac{1}{\beta}$ times the KL divergence between $p_t(x_t, \tilde{x}_t)$ and the joint distribution $p_t(x_t) q_t(\tilde{x}_t)$ over $x_t$ and $\tilde{x}_t$ that results when $\tilde{x}_t$ is assumed to be \emph{independent} of $x_t$), the expected cost of the controller in the partially observable case is \emph{guaranteed} to be bounded by the right hand side (RHS) of \eqref{eq:cost bound}. Notice that this RHS is similar to the cost function of $\OPT$. In particular, the expected value operator is a \emph{linearization} of the entropic risk metric\footnote{This risk metric has a long history in robust control (including a close link to $H_\infty$ control) \cite{Whittle81, Whittle02, Glover88}. We note, however, that it can sometimes be conservative in cases with rare, bad events.} $\rho$. By solving $\OPT$, we are minimizing (a linear approximation of) an upper bound on the expected cost even when our state is only partially observable (as long as our estimator for $\tilde{x}_t$ ensures condition \eqref{eq:KL condition}). Once $\OPT$ is solved, we can use Theorem \ref{thm:robustness} to obtain a robustness bound by evaluating the RHS of \eqref{eq:cost bound}.

\section{Algorithms for Synthesizing Representations}
\label{sec:solution}
In this section, we outline our approach for solving $\OPT$ offline.
We note that $\OPT$ is non-convex in general. While one could potentially apply general non-convex optimization techniques such as gradient-based methods, computing gradients quickly becomes computationally expensive due to the large number of decision variables involved (even in the setting with finite state and action spaces, we have decision variables corresponding to $q_t(\tilde{x}_t|x_t)$ and $\pi_t(u_t|\tilde{x}_t)$ for every possible value of $x_t, \tilde{x}_t$ and $u_t$ at every time step). Our key insight here is to exploit the structure of $\OPT$ to propose an efficient iterative algorithm in three different dynamical settings: discrete, linear-Gaussian, and nonlinear-Gaussian. These settings are particularly convenient to work with because they allow the objective of $\OPT$ to be \emph{computed in closed-form}. In each setting, the algorithm iterates over the following three steps:
\begin{enumerate}
	\item Fix $\{q_t, \pi_t\}_{t = 0}^{T - 1}$ and solve for $\{p_t\}_{t = 0}^T$ using the forward dynamical equations.
	\item Fix $\{p_t\}_{t = 0}^T, \{\pi_t\}_{t = 0}^{T - 1}$ and solve for $\{q_t\}_{t = 0}^{T - 1}$ by satisfying necessary conditions for optimality.
	\item Fix $\{p_t\}_{t = 0}^T, \{q_t\}_{t = 0}^{T - 1}$ and solve for $\{\pi_t\}_{t = 0}^{T - 1}$  by solving a convex optimization problem. 
\end{enumerate}
In our implementation, we iterate over these steps until convergence (or until an iteration limit is reached). This is a common strategy employed in solving similar kinds of MDPs with information-theoretic objectives \cite{Fox16, Fox15}. While we cannot currently guarantee convergence, our iterative procedure is extremely efficient (since all the computations above can be performed either in closed-form or via a convex optimization problem) and produces good solutions in practice (see Section \ref{sec:examples}). We describe instantiations of each step for three different dynamical settings below.

\subsection{Discrete Systems}

In order to solve Step 1, note that the forward dynamics of the system for fixed $q_t(\tilde{x}_t | x_t), \pi_t(u_t | \tilde{x}_t)$ are given by:
\begin{flalign}
	p_t(x_{t + 1} | x_t) &= \sum_{u, \tilde{x}}p_t(x_{t + 1} | x_t, u)\pi_t(u | \tilde{x})q_t(\tilde{x} | x_t), \nonumber \\
	p_{t + 1}(x_{t + 1}) &= \sum_{x} p_t(x_{t + 1} | x) p_t(x). \label{eq:forward eq}
\end{flalign}
The Lagrangian functional for $\OPT$ is $\mathcal{L} = \sum_{t = 0}^{T} \mathcal{L}_t$ where
\begin{footnotesize} 
\begin{flalign*}
\mathcal{L}_t &= \sum_{\tilde{x}, u, x}c_t(x, u)\pi_t(u|\tilde{x})q_t(\tilde{x}|x)p_t(x) \\
	&- \sum_{x'}\nu_{t + 1}(x')\left(p_{t + 1}(x') - \sum_{x, u, \tilde{x}}p_t(x'|x, u)\pi_t(u|\tilde{x})q_t(\tilde{x}|x)p_t(x)\right) \\
	&+ \frac{1}{\beta}\sum_{x, \tilde{x}}q_t(\tilde{x}|x)p_t(x)\log\left(\frac{q_t(\tilde{x}|x)}{q_t(\tilde{x})}\right). \nonumber
\end{flalign*}
\end{footnotesize}
where $\nu_t(x_t)$ are Lagrange multipliers. The Lagrange multipliers that normalize distribution variables are omitted since they do not contribute to the analysis. The following proposition demonstrates the structure of $q_t(\tilde{x}_t | x_t)$ using the first-order necessary condition (FONC) for optimality \cite{Boyd04}.

\begin{theorem}
	\label{thm:boltzmann}
	A necessary condition for $q_t(\tilde{x} | x)$ to be optimal for $\OPT$ is that
	\begin{flalign}
		q_t(\tilde{x}_t | x_t) = \frac{q_t(\tilde{x}_t)\exp\left(-\beta\mathbb{E}(\nu_{t + 1} + c_t | x_t, \tilde{x}_t)\right)}{Z_t(x_t)}, \label{eq:code given state}
	\end{flalign}
	where $\nu_T(x_T) = c_T(x_T)$ and
	\begin{flalign}
		\nu_t(x_t) &= \EE(c_t + \nu_{t + 1}| x_t) + \frac{1}{\beta}\KL\left(q_t(\tilde{x}_t|x_t)\|q_t(\tilde{x}_t)\right),\label{eq:value func}\\
		Z_t(x_t) &= \sum_{\tilde{x}}q_t(\tilde{x})\exp\left(-\beta[\EE(c_t + \nu_{t + 1}| x_t, \tilde{x}))]\right). \label{eq:partition func} 
	\end{flalign}
\end{theorem}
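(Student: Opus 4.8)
The plan is to obtain both displayed conditions as first-order necessary conditions (FONC) for stationarity of the Lagrangian $\mathcal{L} = \sum_{t=0}^{T}\mathcal{L}_t$, regarding the conditional tables $q_t(\tilde{x}|x)$ and the state marginals $p_t(x)$ as the free variables and the $\nu_t$ as the multipliers that enforce the forward dynamics in \cref{eq:forward eq}. First I would differentiate $\mathcal{L}$ with respect to a single entry $q_t(\tilde{x}|x)$, holding $\{\pi_t\}$ and $\{p_t\}$ fixed, and set the derivative to zero together with the multiplier for the normalization $\sum_{\tilde{x}} q_t(\tilde{x}|x)=1$.

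The crucial subtlety is that the marginal $q_t(\tilde{x})=\sum_{x'} q_t(\tilde{x}|x')\,p_t(x')$ appearing in the information term of $\mathcal{L}_t$ itself depends on $q_t(\tilde{x}|x)$. I would therefore split the derivative of the information term into a \emph{direct} part, coming from the explicit factor $q_t(\tilde{x}|x)\log q_t(\tilde{x}|x)$ and equal to $p_t(x)\big[\log\big(q_t(\tilde{x}|x)/q_t(\tilde{x})\big)+1\big]$, and an \emph{indirect} part that propagates through $q_t(\tilde{x})$. Using $\partial q_t(\tilde{x})/\partial q_t(\tilde{x}|x)=p_t(x)$ together with $\sum_{x'} q_t(\tilde{x}|x')p_t(x')=q_t(\tilde{x})$, the indirect part collapses to exactly $-p_t(x)$, so the stray $+1$ and $-1$ cancel and the information term contributes only $\tfrac{1}{\beta}p_t(x)\log\big(q_t(\tilde{x}|x)/q_t(\tilde{x})\big)$. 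The cost term of $\mathcal{L}_t$ contributes $p_t(x)\,\EE(c_t\,|\,x,\tilde{x})$ and the reconstruction part of the dynamics constraint contributes $p_t(x)\,\EE(\nu_{t+1}\,|\,x,\tilde{x})$. Dividing through by $p_t(x)$ and solving for $q_t(\tilde{x}|x)$ yields the claimed Boltzmann form in \cref{eq:code given state}; the $x$-dependent normalization multiplier is then pinned down by $\sum_{\tilde{x}}q_t(\tilde{x}|x)=1$, which is precisely the partition function $Z_t(x)$ in \cref{eq:partition func}.

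To obtain the recursion \cref{eq:value func} I would next impose stationarity in $p_t(x)$. The same marginal-dependence argument applied to the information term now yields a direct part $\tfrac{1}{\beta}\KL(q_t(\tilde{x}|x)\,\|\,q_t(\tilde{x}))$ and an indirect part equal to the state-independent constant $-\tfrac{1}{\beta}$. Combining this with the cost contribution $\EE(c_t\,|\,x)$, the forward term $\EE(\nu_{t+1}\,|\,x)$, and the $-\nu_t(x)$ coming from the constraint in $\mathcal{L}_{t-1}$ that couples $\nu_t$ to $p_t$, setting the total to zero reproduces \cref{eq:value func} up to a state-independent constant. Evaluating the same condition at $t=T$, where no information term or successor constraint is present and only the terminal cost survives, gives the boundary condition $\nu_T(x_T)=c_T(x_T)$.

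The main obstacle is purely the bookkeeping around the marginal $q_t(\tilde{x})$: I must track its dependence on the decision variables in \emph{both} FONCs and verify the two cancellations (the $+1/-1$ in the $q_t$ equation and the constant $-\tfrac{1}{\beta}$ in the $p_t$ equation). The leftover state-independent constants are harmless, since a uniform shift of $\nu_{t+1}$ changes $\EE(\nu_{t+1}\,|\,x,\tilde{x})$ by the same amount for every $\tilde{x}$ and is absorbed into $Z_t(x)$, leaving the optimal $q_t(\tilde{x}|x)$ unchanged.
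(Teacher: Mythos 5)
Your proposal is correct and follows essentially the same route as the paper's proof: form the Lagrangian with multipliers $\nu_t$ enforcing the forward dynamics, impose the first-order conditions in $p_t(x)$ and $q_t(\tilde{x}\,|\,x)$, and read off the value-function recursion and the Boltzmann form with $Z_t$ fixed by normalization. Your explicit tracking of the indirect dependence through the marginal $q_t(\tilde{x})$ (the cancelling $+1/-1$ and the constant $-\tfrac{1}{\beta}$) is a point the paper's appendix glosses over, but it only sharpens, rather than changes, the argument.
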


\begin{proof}
\iftoggle{extended}{See \cref{apdx:boltzmann}.}{Equation \eqref{eq:value func} is derived by setting the functional derivative $\delta \mathcal{L}/\delta p_t(x_t) = 0$ and solving for $\nu_t$. Repeating this process for $\delta \mathcal{L}/\delta q_t(x_t | \tilde{x}_t)$ yields \eqref{eq:code given state}. Equation \eqref{eq:partition func} is the normalization of $q_t(\tilde{x}_t | x_t)$. Proof is provided in \cite{Pacelli18}.}
\end{proof}

This proposition demonstrates that $q_t(\tilde{x}_t | x_t)$ is a \emph{Boltzmann distribution} with $Z_t(x_t)$ as the \emph{partition function} and $\beta$ playing the role of inverse temperature. In order to solve Step 2, we simply evaluate the closed-form expression \eqref{eq:code given state}.

It is easily verified that the function $\nu_t(x_t)$ is the \emph{cost-to-go function} for $\OPT$. Thus $\OPT$ can be written as a dynamic programming problem using $\nu_t(x_t)$:
\begin{flalign}
	\underset{q_t, \pi_t}{\textrm{min}} \quad \EE \left(c_t + \nu_{t + 1} + \frac{1}{\beta}\KL\big(q_t(\tilde{x}_t|x_t)||q_t(\tilde{x}_t)\big)\right). && \hspace{-1.5mm}(\DP)	\nonumber
\end{flalign}

This allows us to solve Step 3. In particular, when $p_t(x_t), q_t(
\tilde{x}_t | x_t)$ are fixed, $\DP$ is a linear programming problem in $\pi_t$ and can thus be solved efficiently. 


\subsection{Linear-Gaussian Systems with Quadratic Costs}
\label{sec:LG}

A discrete-time linear-Gaussian (LG) system is defined by the transition system
\begin{flalign}
	x_{t + 1} &= A_t x_t + B_t u_t + \epsilon_t, \quad \epsilon_t \sim N(0, \Sigma_{\epsilon_t}),\label{eq:lg}
\end{flalign}
where $\mathcal{X} = \RR^n$, $\mathcal{U} = \RR^m$ and $x_0 \sim N(\bar{x}_0, \Sigma_{x_0})$. We assume that the cost function is quadratic:
\begin{small}
\begin{equation}
	c_t(x, u) \coloneqq \frac{1}{2}\transp{(x - g_t)}Q_t(x - g_t) + \frac{1}{2}\transp{(u - w_t)} R_t (u - w_t),\nonumber
\end{equation}
\end{small}
with $Q_t,\ R_t \succeq 0, R_T = 0$. We explicitly parameterize the TRVs and control policy as:
\begin{equation}
	\tilde{x}_t = C_t x_t + a_t + \eta_t, \quad u_t = K_t\tilde{x}_t + h_t,
\end{equation}
where the random variable $\eta_t \sim N(0, \Sigma_{\eta_t})$ is additive process noise. This structure dictates that $p_t(x_t),\  q_t(\tilde{x}_t | x_t)$ are Gaussians $N(\bar{x}_t, \Sigma_{x_t}),\ N(\bar{\tilde{x}}_t, \Sigma_{\tilde{x}_t})$ respectively, with $\bar{\tilde{x}}_t = C_t \bar{x}_t + a_t,\ \Sigma_{\tilde{x}_t} = C_t \Sigma_{x_t} \transp{C}_t + \Sigma_{\eta_t}$. This allows for both Steps 1 and 2 to be computed in closed form. The latter is presented in the following theorem.
\begin{theorem}
\label{thm:lg sln}
Define the notational shorthand $G_t \coloneqq \transp{C_t}\inv{(C_t\Sigma_{x_t}\transp{C}_t + \Sigma_{\eta_t})}C_t,\ M_t \coloneqq (A_t + B_t K_t C_t)$. For the LG system, the necessary condition \cref{eq:code given state} is equivalent to the conditions
\begin{flalign}
	C_t = &-\beta \Sigma_{\eta_t}\transp{K}_t \transp{B}_t P_{t + 1} A_t, \nonumber\\
	a_t = &-\Sigma_{\eta_t}\big(\beta \transp{K}_t \transp{B}_t (b_{t + 1} + P_{t + 1} B_t h_t)\label{eq:linnec}\\
	&+\beta \transp{K}_t R_t (h_t - w_t) - \Sigma^{-1}_{\tilde{x}_t}\bar{\tilde{x}}_t\big), \nonumber\\
	\inv{\Sigma}_{\eta_t} = &\Sigma^{-1}_{\tilde{x}_t} + \beta \transp{K}_t  (\transp{B}_t P_{t + 1} B_t + R_t) K_t,\nonumber
\end{flalign}
where the cost-to-go function is the recursively defined quadratic function $\nu_t(x) = \frac{1}{2}\transp{x}_tP_tx_t + \transp{b_t}x_t + \mathrm{constant}$ with values $P_{T} = Q_{T + 1},\ b_{T} = -Q_{T} g_{T}$ and
\begin{flalign}
	P_t = &Q_t + \beta^{-1}G_t + \transp{C}_t \transp{K}_t R_t K_t C_t + \transp{M_t}P_{t + 1}M_t,\nonumber\\
	b_t = &\transp{M_t} P_{t + 1} B_t (h_t + K_t a_t) - Q_t g_t - \inv{\beta} G_t \bar{x}_t\nonumber\\ 
	&+ \transp{C}_t \transp{K}_t R_t (K_t a_t + h_t - w_t) + \transp{M_t}b_{t + 1}. \label{eq:value quad}
\end{flalign}
\end{theorem}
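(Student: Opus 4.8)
The plan is to specialize the Boltzmann characterization of \cref{thm:boltzmann} to the linear-Gaussian parameterization and then read off the three matrix equations by matching coefficients of quadratic forms. The backbone is a backward induction establishing that the cost-to-go $\nu_t$ is quadratic, $\nu_t(x) = \frac{1}{2}\transp{x}P_t x + \transp{b_t}x + \const$. The base case $\nu_T(x) = c_T(x)$ is quadratic by the assumed form of the terminal cost (with $R_T = 0$), which fixes the terminal values of $P_T$ and $b_T$. For the inductive step I would assume $\nu_{t+1}$ is quadratic with parameters $P_{t+1}, b_{t+1}$ and propagate this through the update \cref{eq:value func}.

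First I would compute the conditional expectation in the exponent of \cref{eq:code given state}. Conditioned on $(x_t, \tilde{x}_t)$ the input is the deterministic $u_t = K_t \tilde{x}_t + h_t$ and the successor is $x_{t+1} = A_t x_t + B_t u_t + \epsilon_t$, so only $\epsilon_t$ is random; taking the expectation over $\epsilon_t$ folds the noise covariance into an additive constant and leaves $\EE(\nu_{t+1} + c_t \mid x_t, \tilde{x}_t)$ as a quadratic form in $(x_t, \tilde{x}_t)$. Substituting this together with the Gaussian prior $q_t(\tilde{x}_t) = N(\bar{\tilde{x}}_t, \Sigma_{\tilde{x}_t})$ into \cref{eq:code given state} shows the exponent is jointly quadratic in $\tilde{x}_t$, so $q_t(\tilde{x}_t \mid x_t)$ is Gaussian, consistent with the postulated $N(C_t x_t + a_t, \Sigma_{\eta_t})$. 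The three equations \cref{eq:linnec} then arise by matching, respectively, the quadratic-in-$\tilde{x}_t$ coefficient (giving $\inv{\Sigma}_{\eta_t}$), the cross term coupling $x_t$ and $\tilde{x}_t$ (giving $C_t$, since the only coupling comes from $\transp{x_t}\transp{A_t}P_{t+1}B_t K_t\tilde{x}_t$ in the expansion of $\nu_{t+1}$ and $c_t$ contributes none), and the remaining linear-in-$\tilde{x}_t$ term (giving $a_t$); the partition function $Z_t(x_t)$ in \cref{eq:partition func} is the Gaussian normalizer.

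Finally I would close the induction by deriving the recursions \cref{eq:value quad} from the value update \cref{eq:value func}. Here the expectation $\EE(c_t + \nu_{t+1} \mid x_t)$ is taken over $\tilde{x}_t \sim N(C_t x_t + a_t, \Sigma_{\eta_t})$, under which $x_{t+1} = M_t x_t + B_t(K_t a_t + h_t) + B_t K_t \eta_t + \epsilon_t$ with $M_t = A_t + B_t K_t C_t$, and the KL term is the closed-form divergence between $N(C_t x_t + a_t, \Sigma_{\eta_t})$ and $N(\bar{\tilde{x}}_t, \Sigma_{\tilde{x}_t})$. Collecting quadratic-in-$x_t$ contributions yields $Q_t$ from the state cost, $\transp{C_t}\transp{K_t} R_t K_t C_t$ from the input cost, $\transp{M_t} P_{t+1} M_t$ from the propagated cost-to-go, and $\beta^{-1}G_t$ from the mean-mismatch term $\transp{(C_t(\bar{x}_t - x_t))}\inv{\Sigma}_{\tilde{x}_t}(C_t(\bar{x}_t - x_t))$ of the KL (using $G_t = \transp{C_t}\inv{\Sigma}_{\tilde{x}_t}C_t$); the linear terms assemble into $b_t$, and the noise traces are absorbed into the constant.

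I expect the main obstacle to be the bookkeeping of this last step rather than any conceptual difficulty: the matrix $C_t$ defined by \cref{eq:linnec} itself depends on $\Sigma_{\eta_t}$ and $P_{t+1}$, so the per-timestep conditions are coupled, and one must track carefully which pieces of the KL divergence land in $P_t$ versus $b_t$ versus the constant. Since the theorem asserts only the equivalence of \cref{eq:code given state} to \cref{eq:linnec} (rather than a closed-form solution of the coupled system), it suffices to verify the coefficient matches term by term, which keeps the algebra manageable.
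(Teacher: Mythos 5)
Your proposal is correct and follows essentially the same route as the paper's proof: backward induction on the quadratic form of $\nu_t$, taking the logarithm of the Boltzmann condition \cref{eq:code given state} to recognize $q_t(\tilde{x}_t\mid x_t)$ as a Gaussian, and matching the quadratic, cross, and linear coefficients against the parameterization $\tilde{x}_t = C_t x_t + a_t + \eta_t$ to obtain \cref{eq:linnec}, with the KL term contributing the $\beta^{-1}G_t$ piece to the $P_t$ recursion. The only difference is cosmetic ordering (you derive the coefficient matches before closing the value-function recursion, the paper does the reverse), which does not affect the argument.
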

\begin{proof}
\iftoggle{extended}{See \cref{apdx:lg sln}.}{The KL-divergence term in \eqref{eq:value func} is quadratic: $\KL\left(q(\tilde{x}|x_t)||q_t(\tilde{x})\right) = \frac{1}{2}\transp{(\bar{x}_t - x_t)} G_t(\bar{x}_t - x_t)$ up to a constant. Since $c_t$ is quadratic in $x_t$, the form for $\nu_t(x_t)$ is derived by backward induction starting with $\nu_T(x_T) = \EE(c_T)$. The forms for $C_t, a_t, \Sigma_{\eta_t}$ are derived by taking the logarithm of both sides of \cref{eq:code given state}, plugging \cref{eq:value quad} in for $\nu_t(x_t)$, completing the square, and exponentiating both sides to produce a Gaussian. The constant term in \cref{eq:value quad} is collected into $Z_t(x_t)$. A complete proof can be found in \cite{Pacelli18}.}
\end{proof}

\iftoggle{extended}{
Finally, when $q_t, p_t$ are fixed, $\DP$ is the unconstrained convex quadratic program $\min_{K_t, h_t} J(K_t, h_t)$ where
\begin{flalign}
J(&K_t, h_t) \coloneqq \frac{1}{2} \transp{(K_t \bar{\tilde{x}}_t + h_t - w_t)} R_t (K_t \bar{\tilde{x}}_t + h_t - w_t)\nonumber\\
	&+ \frac{1}{2}\tr{\Sigma_{\tilde{x}_t} \transp{K}_t R_t K_t} + \frac{1}{2}\transp{\bar{x}}_t \transp{A}_t P_{t + 1}( A_t \bar{x}_t +  2B_t h_t)\nonumber \\
	&+\frac{1}{2} \transp{\bar{\tilde{x}}}_t \transp{K}_t \transp{B}_t P_{t + 1} B_t (K_t \bar{\tilde{x}}_t + 2h_t) + \frac{1}{2} \transp{f}_t \transp{B}_t P_{t + 1} B_t h_t\nonumber\\
	&+ \frac{1}{2} \transp{\bar{x}}_t (\transp{A}_t P_{t + 1} B_t  K_t C_t + \transp{C} K_t B_t P_{t + 1} A_t) \bar{x}_t\nonumber\\
    & + \transp{\bar{x}}_t \transp{A} P_{t + 1} B_t K_t a_t + \frac{1}{2}\tr{\Sigma_{\tilde{x}_t} \transp{K} \transp{B} P_{t + 1} B_t K_t}\nonumber\\
    &+ \transp{b}_{t + 1} (A_t \bar{x}_t + B_t K_t \bar{\tilde{x}}_t + B_t h_t)\\
    &+ \frac{1}{2} \tr{\Sigma_{x_t} \transp{A}_t P_{t + 1} A_t} + \frac{1}{2}\tr{\Sigma_{x_t}\transp{C}_t \transp{K}_t \transp{B} P_{t + 1} A_t}\nonumber\\
    &+ \frac{1}{2}\tr{\Sigma_{x_t} \transp{A}_t P_{t + 1} B_t K_t C_t}.\nonumber
\end{flalign}
This program can be solved very efficiently (e.g., using active-set or interior point methods) \cite{Wright99}.
}{
Finally, when $q_t, p_t$ are fixed, $\DP$ is the unconstrained convex quadratic program with decision variables $K_t, h_t$, and can be solved very efficiently \cite{Wright99}.
}

\subsection{Nonlinear-Gaussian Systems}
\label{sec:ilqg}
When the dynamics are nonlinear-Gaussian (NLG), i.e. when \cref{eq:lg} is changed to
\begin{flalign}
	x_{t + 1} &= f(x_t, u_t) + \epsilon_t, \quad \epsilon_t \sim N(0, \Sigma_{\epsilon_t}),\label{eq:nlg}
\end{flalign}
minimizing $\OPT$ is challenging due to $p_t(x_t)$ no longer being Gaussian. We tackle this challenge by leveraging our results for the LG setting and adapting the iterative Linear Quadratic Regulator (iLQR) algorithm \cite{Todorov05, Li04, Jacobson70}.

Given an initial nominal trajectory $\{\hat{x}_t, \hat{u}_t\}_{t = 0}^{T}$, the matrices $\{A_t, B_t\}_{t = 0}^{T - 1}$ are produced by linearizing $f(x_t, u_t)$ along the trajectory. 
The pair $(A_t, B_t)$ describes the dynamics of a perturbation $\delta x_t = x_t - \hat{x}_t$ in the neighborhood of $x_t$ for a perturbed input $\delta u_t = u_t - \hat{u}_t$ in the neighborhood of $u_t$:
\begin{flalign}
\delta x_{t+1} = A_t \delta x_t + B_t \delta u_t + \epsilon_t, \quad \delta x_0 \sim N(0, \Sigma_{x_t}). \label{eq:delta lg}
\end{flalign} 

We compute (a quadratic approximation of) the perturbation costs $\delta c_t(\delta x, \delta u) \coloneqq c_t(\hat{x}_t + \delta x, \hat{u}_t + \delta u)$ subject to \cref{eq:delta lg}. We can then apply the solution method outlined in Section \ref{sec:LG} to search for an optimal $\{\delta x_t, \delta u_t\}_{t = 0}^{T}$. We then update the nominal state and input trajectories to $\{\hat{x}_t + \delta x_t,\hat{u}_t + \delta u_t\}_{t = 0}^{T}$ and repeat the entire process until the nominal trajectory converges.

\section{Online Estimation and Control}

Once the task-driven representation and policy have been synthesized offline, we can leverage them for computationally-efficient and robust online control. The key idea behind our online approach is to use the robot's sensor measurements $\{y_i\}_{i = 1}^t$ to \emph{only} estimate the TRVs $\tilde{x}_t$. Once $\tilde{x}_t$ has been estimated, the control policy $\pi_t(u_t | \tilde{x}_t)$ can be applied. This is in stark contrast to most prevalent approaches for controlling robotic systems, which aim to accurately estimate the full state $x_t$. We describe our online estimation approach below. 

We maintain a belief distribution $\textrm{bel}(\tilde{x}_t)$ over the TRV-space $\widetilde{\mathcal{X}}$ and update it at each time $t$ using a Bayes filter \cite{Thrun05}. Specifically, we perform two steps every $t$:
\begin{enumerate}
\item \textbf{Process Update.} The system model is used to update the belief-state to the current time step: $\overline{\textrm{bel}}(\tilde{x}_{t}) = \sum_{\tilde{x}_{t-1}} q_{t-1}(\tilde{x}_{t} | \tilde{x}_{t - 1}, u_{t - 1}) \textrm{bel}(\tilde{x}_{t - 1})$.
\item \textbf{Measurement Update.} The measurement model is used to integrate the observation $y_t$ into the belief-state: $\textrm{bel}(\tilde{x}_t) \propto \sigma_t(y_t | \tilde{x}_t) \overline{\textrm{bel}}(\tilde{x}_t)$.
\end{enumerate}
To apply this filter, the distributions $q_t(\tilde{x}_{t + 1} | \tilde{x}_t,  u_t)$ and $\sigma_t(y_t | \tilde{x}_t)$ are precomputed \emph{offline}. Bayes' theorem states $p_t(x_t | \tilde{x}_t) = q_t(\tilde{x}_t | x_t) p_t(x_t)/q_t(\tilde{x}_t)$. Consequently,
\begin{flalign*}
\small
p_t(x_{t + 1} | \tilde{x}_t, u_t) &= \sum_{x_t} p_t(x_{t + 1} | u_t, x_t) p_t(x_t | \tilde{x}_t),\\
q_{t}(\tilde{x}_{t + 1} | \tilde{x}_t, u_t) &= \sum_{x_{t + 1}} q_{t}(\tilde{x}_{t + 1} | x_{t + 1})p_t(x_{t + 1} | \tilde{x}_t, u_t),\\
\sigma_t(y_t | \tilde{x}_t) &= \sum_{x_t} \sigma_t(y_t | x_t) p_t(x_t | \tilde{x}_t).
\end{flalign*}
\iftoggle{extended}{%
In the discrete case, the above equations can be evaluated directly. In the LG case with sensor model,
\begin{flalign*}
	y_t = D_t x_t + \omega_t, \quad \omega_t \sim N(0, \Sigma_{\omega_t}),
\end{flalign*}
the measurement and update steps take the form of  the traditional Kalman updates applied to an LG system induced on the TRV-space by $q_t$ and the system dynamics. This structure is elucidated in the following theorem.
\begin{theorem}
	\label{thm:kalman}
	The measurement and process updates for a Bayesian filter on the TRVs $\{\tilde{x}_t\}_{t = 0}^T$ in the LG case are the Kalman filter measurement and process updates for the LG system
	\begin{flalign}
		\tilde{x}_{t + 1} &= \tilde{A}_t \tilde{x}_t + \tilde{B}_t u_t + \tilde{r}_t + \tilde{\epsilon}_t,\\
		\tilde{y}_t &= \tilde{D}_t \tilde{x}_t + \tilde{\omega}_t,
	\end{flalign}
	where
	\begin{flalign*}
		\tilde{A}_t &\coloneqq C_{t + 1} A_t \Sigma_{x_t} \transp{C}_t \inv{\Sigma}_{\tilde{x}_t}, & \tilde{C}_t &\coloneqq D_t \Sigma_{x_t} \transp{C}_t \inv{\Sigma}_{\tilde{x}_t},\\
		\tilde{B}_t &\coloneqq C_{t + 1} B_t, & \tilde{\omega}_t &\sim N(0, \Sigma_{y_t | \tilde{x}_t}),\\
		\tilde{r}_t &\coloneqq -\tilde{A}_t \bar{\tilde{x}}_t + C_{t + 1} A_t \bar{x}_t + a_{t + 1}, & \tilde{\epsilon}_t &\sim N\left(0, \Sigma_{\tilde{x}_t|\tilde{x}_t, u_t}\right).
	\end{flalign*}
\end{theorem}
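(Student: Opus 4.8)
The plan is to exploit the joint Gaussianity of $(x_t,\tilde{x}_t)$ to show that each of the precomputed conditionals $q_t(\tilde{x}_{t+1}\mid\tilde{x}_t,u_t)$ and $\sigma_t(y_t\mid\tilde{x}_t)$ is itself linear-Gaussian with the claimed parameters, and then to invoke the standard fact that a Bayes filter whose prediction and correction kernels are linear-Gaussian reduces exactly to the Kalman filter. First I would pin down the backward conditional $p_t(x_t\mid\tilde{x}_t)$, which is the common ingredient in all three marginalization identities stated above. Since $\tilde{x}_t = C_t x_t + a_t + \eta_t$ with $x_t\sim N(\bar{x}_t,\Sigma_{x_t})$ and $\eta_t\sim N(0,\Sigma_{\eta_t})$ independent, the pair $(x_t,\tilde{x}_t)$ is jointly Gaussian with cross-covariance $\Sigma_{x_t}\transp{C}_t$ and marginal $\Sigma_{\tilde{x}_t}=C_t\Sigma_{x_t}\transp{C}_t+\Sigma_{\eta_t}$. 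Gaussian conditioning then gives $p_t(x_t\mid\tilde{x}_t)=N(\mu_{x_t\mid\tilde{x}_t},\Sigma_{x_t\mid\tilde{x}_t})$ with
\[ \mu_{x_t\mid\tilde{x}_t}=\bar{x}_t+\Sigma_{x_t}\transp{C}_t\inv{\Sigma}_{\tilde{x}_t}(\tilde{x}_t-\bar{\tilde{x}}_t), \qquad \Sigma_{x_t\mid\tilde{x}_t}=\Sigma_{x_t}-\Sigma_{x_t}\transp{C}_t\inv{\Sigma}_{\tilde{x}_t}C_t\Sigma_{x_t}. \]
The crucial structural point is that the mean is affine in $\tilde{x}_t$ while the covariance does not depend on $\tilde{x}_t$.

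Next I would push this conditional forward through the two marginalization identities. For the measurement model, substituting $y_t=D_tx_t+\omega_t$ into $\sigma_t(y_t\mid\tilde{x}_t)=\int\sigma_t(y_t\mid x_t)\,p_t(x_t\mid\tilde{x}_t)\,dx_t$ yields a Gaussian whose mean is affine in $\tilde{x}_t$ with slope $D_t\Sigma_{x_t}\transp{C}_t\inv{\Sigma}_{\tilde{x}_t}=\tilde{C}_t$ and whose covariance $D_t\Sigma_{x_t\mid\tilde{x}_t}\transp{D}_t+\Sigma_{\omega_t}=\Sigma_{y_t\mid\tilde{x}_t}$ is constant, matching the stated observation equation. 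For the process model, I would compose the two identities: first $p_t(x_{t+1}\mid\tilde{x}_t,u_t)=\int p_t(x_{t+1}\mid u_t,x_t)\,p_t(x_t\mid\tilde{x}_t)\,dx_t$ is Gaussian with mean $A_t\mu_{x_t\mid\tilde{x}_t}+B_tu_t$ and covariance $A_t\Sigma_{x_t\mid\tilde{x}_t}\transp{A}_t+\Sigma_{\epsilon_t}$; then encoding with $\tilde{x}_{t+1}=C_{t+1}x_{t+1}+a_{t+1}+\eta_{t+1}$ and collecting terms gives coefficient of $\tilde{x}_t$ equal to $C_{t+1}A_t\Sigma_{x_t}\transp{C}_t\inv{\Sigma}_{\tilde{x}_t}=\tilde{A}_t$, coefficient of $u_t$ equal to $C_{t+1}B_t=\tilde{B}_t$, constant offset $C_{t+1}A_t\bar{x}_t-\tilde{A}_t\bar{\tilde{x}}_t+a_{t+1}=\tilde{r}_t$, and residual covariance $C_{t+1}(A_t\Sigma_{x_t\mid\tilde{x}_t}\transp{A}_t+\Sigma_{\epsilon_t})\transp{C}_{t+1}+\Sigma_{\eta_{t+1}}$, matching the claimed transition equation term for term.

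Having shown both induced conditionals are linear-Gaussian with means affine in the conditioning variables and state-independent covariances, the final step is immediate: a Bayes filter with linear-Gaussian prediction and correction kernels preserves Gaussianity of the belief, and its process and measurement steps coincide with the Kalman predict and update recursions for the synthesized system, completing the proof.

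The main obstacle I anticipate is purely the affine bookkeeping: correctly tracking the constant offsets (the $\bar{x}_t$, $\bar{\tilde{x}}_t$, and $a_t$ terms) so that they aggregate into $\tilde{r}_t$ rather than being absorbed into the linear gain, and verifying that the conditional covariances genuinely do not depend on the conditioning value, so that $\tilde{\epsilon}_t$ and $\tilde{\omega}_t$ are legitimate additive, value-independent Gaussian noises as a Kalman model requires. A secondary point worth flagging is that $\{\tilde{x}_t\}$ need not be Markov; the one-step kernel $q_t(\tilde{x}_{t+1}\mid\tilde{x}_t,u_t)$ is the marginalized transition defined above, so the theorem should be read as identifying the filter induced by these precomputed kernels rather than asserting exactness of the belief with respect to the full-state posterior.
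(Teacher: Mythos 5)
Your proposal is correct and follows essentially the same route as the paper's own proof: derive the Gaussian backward conditional $p_t(x_t\mid\tilde{x}_t)$ via MMSE/joint-Gaussian conditioning, push it through the marginalization identities to obtain the induced linear-Gaussian measurement and process models with exactly the stated coefficients, and then apply the standard Kalman recursions to that induced system. Your closing remarks on value-independent covariances and on reading the result as the filter induced by the precomputed kernels are sensible refinements of the same argument, not a departure from it.
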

\begin{proof}
	See \cref{apdx:kalman}.
\end{proof}
}{
In the discrete case, the above equations can be evaluated directly. In the LG case, $p_t(x_t | \tilde{x}_t)$ is given by the minimum mean squared error estimate of $x_t$ given $\tilde{x}_t$. If $y_t \sim N(D_t \bar{x}_t, \Sigma_{\omega_t}),\ D_t \in \RR^{l \times n}$, then $\sigma_t(y_t | \tilde{x}_t)$ and the mean of $q_t(\tilde{x}_{t + 1} | \tilde{x}_t, u_t)$ are described by the equations
\begin{flalign*}
\small
\bar{\tilde{x}}_{t + 1} &= C_{t + 1}A_t [\bar{x}_t + \Sigma_{x_t} \transp{C}_t \Sigma_{\tilde{x}_t | x_t}(\tilde{x}_t - \bar{\tilde{x}}_t)] + C_{t + 1} B_t \bar{u}_t,\\
\bar{y}_t &= D_t \bar{x}_t + D_t \Sigma_{x_t} \transp{C}_t \Sigma_{\tilde{x}_t | x_t}(\tilde{x}_t - \bar{\tilde{x}}_t),\\
\Sigma_{y_t} &= D_t \Sigma_{\tilde{x}_t | x_t}\transp{C}_t \Sigma_{\eta_t}^{-1} \transp{D}_t + \Sigma_{\omega_t}.
\end{flalign*}
This system is a partially observable linear-Gaussian system with process and measurement noise induced by the randomness of both $\tilde{x}_t$ and $y_t$. The process and measurement updates are standard Kalman filter updates applied to this system.
}

In the NLG case, we use an extended Kalman filter over $\delta \tilde{x}_t$ (the perturbed TRV). Specifically, we use the linearized dynamics of $\delta \tilde{x}_t$ and apply the  LG systems approach. 

Given a belief $\textrm{bel}(\tilde{x}_t)$, we compute the control input by sampling $u_t \sim \pi_t(u_t|\tilde{x}_t^\star)$, where $\tilde{x}_t^\star$ is the maximum likelihood TRV $\tilde{x}_t^\star \coloneqq \max_{\tilde{x}_t} \ \textrm{bel}(\tilde{x}_t)$. Alternatively, one can sample the TRV from $q_t(\tilde{x}_t | x_t)$, but the MLE method is similar to how many Bayesian filters are implemented (e.g. Kalman filters) and allows for a more direct comparison between approaches.

\section{Examples}
\label{sec:examples}

In this section, we demonstrate the efficacy of our task-driven control approach on both a discrete-state scenario and a SLIP model. To select the value of $\beta$, the algorithm is run 10 times with $\beta$ set to evenly spaced values in a listed interval. The controller used is the one with the lowest value of $\beta$ subject to the expected cost of the controller being below a particular value. This choice selects the performant controller with the least state information in the TRVs.

\subsection{Lava Problem}

The first example (\cref{fig:lava}), adapted from \cite{Florence2017} demonstrates a setting where the separation principle \emph{does not hold}. If the robot's belief distribution is $\transp{[0.3, 0.4, 0.0, 0.3, 0.0]}$ while residing in state 4, the optimal action corresponding to the MLE of the state is to move right --- not the optimal left.

Our algorithm was run with three TRVs and $\beta \in [0.001, 1]$ for 30 iterations. The value $\beta = 0.001$  was found to give a negative expected cost. The robot's initial condition was sampled from the aforementioned belief distribution. Online, the robot was modeled to have a faulty sensor localizes to the correct state with a probability of 0.5, with a uniformly random incorrect state returned otherwise.

\cref{fig:lava_results} compares our algorithm's performance with a separation principle approach, i.e. solving the MDP with perfect state information and then performs MLE for the state online. Interestingly, our algorithm produces a \emph{deterministic policy} that moves the robot left three times --- ensuring it is in state 1 --- then moves right twice and remains in the goal. With this policy, it is impossible for the robot to enter the lava state, producing  \emph{a more performant, lower variance trajectory than the separation principle-based solution under measurement noise}. The solution is deterministic at low values of $\beta$ because the distribution in \cref{eq:code given state} is almost uniform --- thereby requiring the policy to be effectively open-loop.

\begin{figure}[t]
\vspace{7pt}
\begin{center}
\includegraphics[width=0.75\columnwidth]{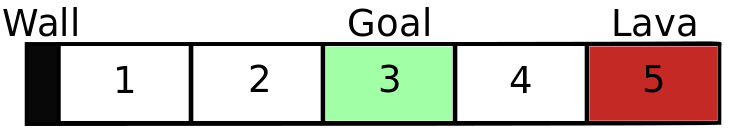}
\end{center} 
\vspace{-12pt}
\caption{The \emph{lava scenario} \cite{Florence2017} consists of five states connected in a line. The robot is allowed to move one step in either direction unless it enters the lava pit on the far right, which is absorbing. The robot receives a reward of 5 points upon entering the goal state and a penalty reward of -1 point otherwise. The terminal rewards are 10 points when in the goal and -10 points when in the lava.\label{fig:lava}}
\vspace{-18pt}
\end{figure}

\subsection{SLIP Model Problems}

Next, we apply the NLG variant of our algorithm to the SLIP model \cite{M'Closkey1993, Schwind1995, Geyer2005}, which is depicted in \cref{fig:slip}. The SLIP model is common in robotics for prototyping legged locomotion controllers. It consists of a single leg whose lateral motion is derived from a spring/piston combination. At touchdown, the state of the robot is given by $\transp{[d, \theta, \dot{r}, \dot{\theta}]}$ where $d$ is displacement of the head from the origin, $\theta$ is the touchdown angle, and $\dot{r}, \dot{\theta}$ are the radial and angular velocities. The system input is $\Delta \theta$, the change in the next touchdown angle. The parameters are the head mass, $M = 1$, the spring constant, $k = 300$, gravity $g = 9.8$, and leg length $r_{max} = 1$. Despite the model's simplicity, the touchdown return map eludes a closed-form description, so MATLAB's \texttt{ode45} is used to compute and linearize the return map.

The goal is to place the head of the robot at $d = 3.2$ after three hops. This experiment is based on a set of psychology experiments that examined the cognitive information used by humans for foot placement while running \cite{Warren1986, Warren2012}. Our NLG algorithm was run with $\beta \in [1, 200]$, control cost matrices $R_t = 10$ for all $t$, and a terminal state cost as the squared distance of the robot from $d = 3.2$. The initial distribution was Gaussian with mean $\transp{[0, 0.3927, -3.273, -6.788]}$, which is in the vicinity of a fixed point of the return map, and covariance $10^{-3}I$. The process covariance was $\Sigma_{\epsilon_t} = 10^{-4}\mathrm{diag}(1, 0.1, 0.5, 0.5)$. 
Here, $\beta = 23.11$.

The results for our simulation are shown in \cref{fig:slip_results}. The algorithm is compared with iLQG solutions with correct and incorrect measurement models. The believed measurement model was a noisy version of the state with covariance $\Sigma_{\omega_t} = 10^{-4} I$ while the actual measurement model used $\Sigma_{\omega_t} = 10^{-3}\transp{S}S$ where the entries of $S$ sampled from a standard uniform distribution each trial. The correct iLQG solution is a locally optimal solution to the problem due to the separation principle. However, when modeling error is introduced, the iLQG solution's performance degrades rapidly. Meanwhile, \emph{the TRV-based control policy is a reliable (i.e. lower variance) and performant control strategy despite this modeling error.} In addition, the solution found by our algorithm satisfies $\mathrm{rank}(C_t) = 1$ for all $t$. Therefore, \emph{the online estimator needs to only track a single TRV} corresponding to this subspace.

\begin{figure}[t]
\begin{center}
\includegraphics[width=0.8\columnwidth]{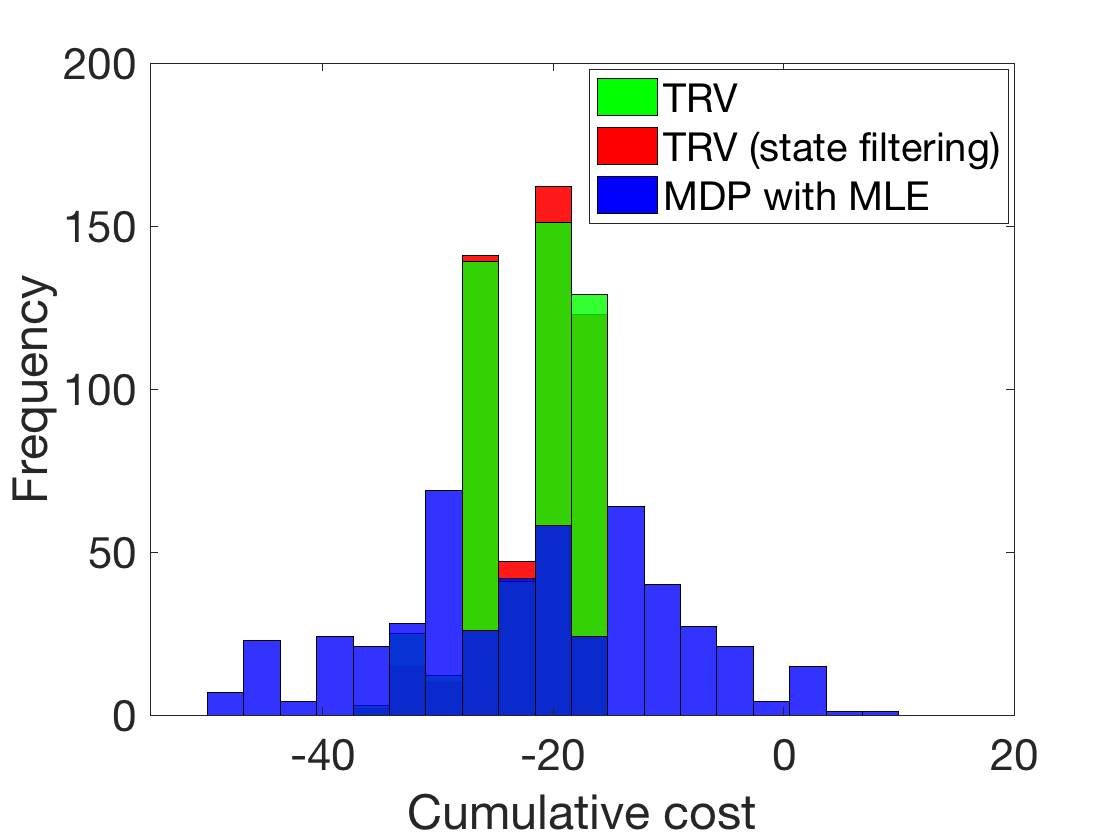}
\end{center} 
\vspace{-14pt}
\caption{This figure summarizes the outcome of 500 simulations of the Lava Problem with different control strategies. Each controller used a Bayesian filter to track the current belief distribution. The exact MDP solution (blue) applied the input corresponding to its MLE state. The TRV solutions sampled from the conditional distribution corresponding to their stochastic control policies given the MLE estimates of the state (red) or TRV (green). \label{fig:lava_results}}
\vspace{-20pt}
\end{figure}

\begin{figure}[t]
\begin{center}
\includegraphics[width=0.8\columnwidth]{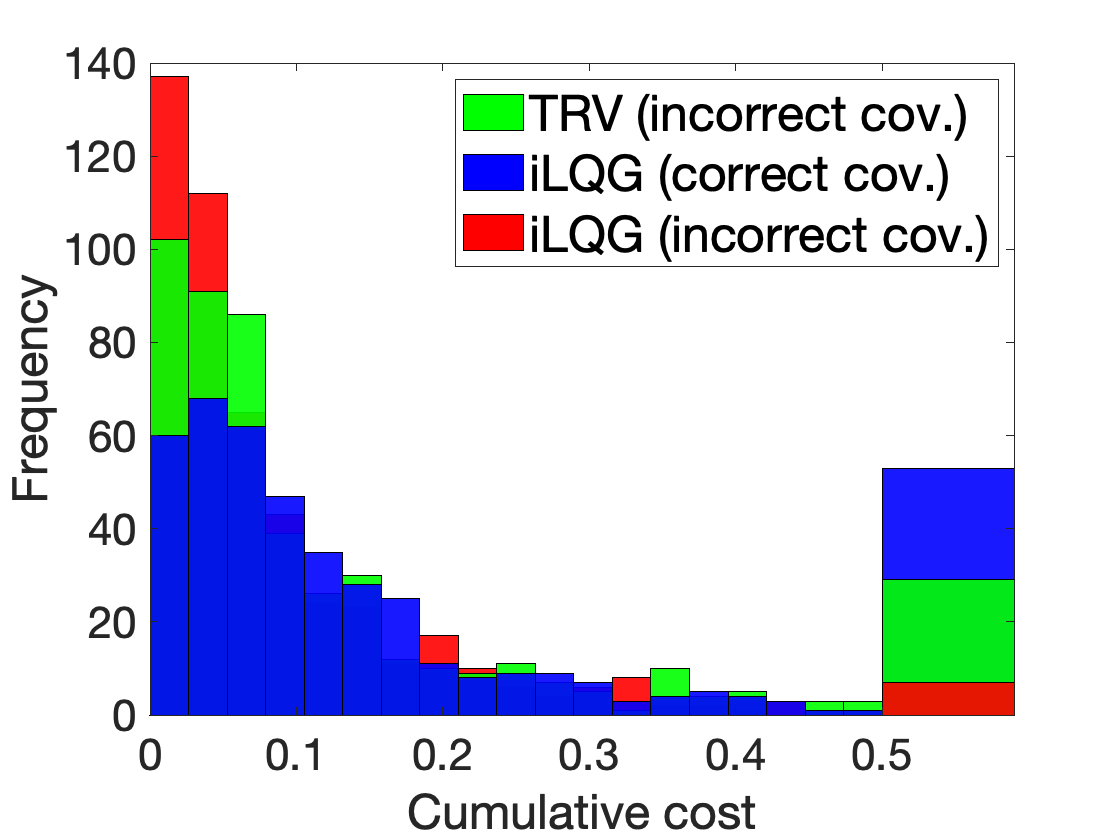}
\end{center} 
\vspace{-14pt}
\caption{This figure summarizes the outcome of 500 simulations of the SLIP Problem with different control strategies. Measurement covariance matrices were randomly sampled. For the iLQG control policy, a Kalman filter tracked the current state estimate, and the corresponding control was applied. For the TRV policy, a Kalman filter was maintained on the TRVs, and the control input corresponding to the MLE TRV was applied. \label{fig:slip_results}}
\vspace{-18pt}
\end{figure}


\section{Conclusion}

We presented an algorithmic approach for task-driven estimation and control for robotic systems. In contrast to prevalent approaches for control that rely on accurate full-state estimation, our approach synthesizes a set of task-relevant variables (TRVs) that are sufficient for achieving the task. Our key insight is to pose the search for TRVs as an information bottleneck optimization problem. We solve this problem offline to identify TRVs and a control policy based on the TRVs. Online, we only estimate the TRVs to apply the policy. Our theoretical results suggest that this approach affords robustness to \emph{unmodeled} measurement uncertainty. This is validated by thorough simulations, including a SLIP model running to a target location. Our simulations also demonstrate that our approach finds highly compressed TRVs (e.g. a \emph{one-dimensional} TRV for the SLIP model). 

{\bf Challenges and Future Work.} On the algorithmic front, we plan to develop approaches that directly minimize the RHS of \eqref{eq:cost bound} instead of a linear approximation of it. We expect that this may lead to improved robustness (as suggested by Theorem \ref{thm:robustness}). On the practical front, we plan to implement our approach on a hardware platform that mimics the gaze heuristic and other examples including navigation problems (where the full state includes a description of the environment, e.g. in terms of an occupancy map). Perhaps the most exciting direction is to explore \emph{active} versions of our approach where the control policy minimizes task-relevant uncertainty, in contrast to current approaches (e.g., belief space planning) that minimize full-state uncertainty. 

We believe that the approach presented in this paper along with the indicated future directions represent an important step towards developing a principled, general framework for task-driven estimation and control.
\iftoggle{extended}{}{\pagebreak}

\iftoggle{extended}{
\appendix
This appendix provides proofs for the main theorems stated in this article.

\subsection{Proof of \cref{thm:boltzmann}}
\label{apdx:boltzmann}

The first step of this derivation is to determine the structure of the value function, $\nu_t(x_t)$. The functional derivative of $\mathcal{L}$ with respect to $p_t(x_t)$ is
\begin{flalign}
\frac{\delta \mathcal{L}}{\delta p_t(x_t)} &= \underbrace{\sum_{\tilde{x}, u} c_t(x_t, u) \pi_t(u | \tilde{x}_t) q_t(\tilde{x} | x_t)}_{\mathbb{E}(c_t | x_t)}\nonumber\\
	&+ \frac{1}{\beta}\underbrace{\sum_{\tilde{x}}q_t(\tilde{x}|x_t)\log\left(\frac{q_t(\tilde{x}|x_t)}{q_t(\tilde{x})}\right)}_{\KL\left(q(\tilde{x}|x_t)||q_t(\tilde{x})\right)}
	- \nu_t(x_t)\\
	&+ \underbrace{\sum_{u, \tilde{x}, x'}\nu_{t + 1}(x')p_t(x'|x_t, u)\pi_t(u|\tilde{x})q_t(\tilde{x}|x_t)}_{\mathbb{E}(\nu_{t + 1}|x_t)}.\nonumber
\end{flalign}
Invoking the FONC yields
\begin{flalign}
	\nu_t(x_t) = \mathbb{E}(c_t + \nu_{t + 1}| x_t) + \frac{1}{\beta}\KL\left(q(\tilde{x}|x_t)||q_t(\tilde{x})\right). \label{eq:value func apdx}
\end{flalign}

Next, we repeat the process for the decision variable $q_t(\tilde{x}_t|x_t)$. The functional derivative of $\mathcal{L}$ is
\begin{flalign}
\frac{\delta \mathcal{L}}{\delta q_t(\tilde{x}_t|x_t)} = &\bigg[\underbrace{\sum_u c_t(x_t, u) \pi_t(u | \tilde{x}_t)}_{\mathbb{E}(c_{t} | x_t, \tilde{x}_t)} - \frac{1}{\beta} \log\left(\frac{q_t(\tilde{x_t}|x_t)}{q(\tilde{x}_t)}\right)\nonumber\\
	&+ \underbrace{\sum_{u, x'}\nu_{t + 1}(x')p_t(x' | x_t, u)\pi_t(u | \tilde{x}_t)}_{\mathbb{E}(\nu_{t + 1} | x_t, \tilde{x}_t)}\bigg]p_t(x_t)\nonumber\\
	&+ \lambda_t(x_t),
\end{flalign}
where $\lambda_t(x_t)$ is the Lagrange multiplier corresponding to the normalization constraint of $q_t(\tilde{x}_t | x_t)$. Invoking the FONC again and solving for $q_t(\tilde{x}_t | x_t)$ yields
\begin{flalign}
	q_t(\tilde{x}_t | x_t) = \frac{q_t(\tilde{x}_t)\exp\left(-\beta\EE(\nu_{t + 1} + c_t | x_t, \tilde{x}_t)\right)}{\exp(\beta \lambda_t(x_t) / p_t(x_t))}.
\end{flalign}
Since $q_t(\tilde{x}_t | x_t)$ is a probability distribution and must be normalized, it is the case that $\exp(\beta \lambda_t(x_t) / p_t(x_t)) = Z_t(x_t)$ where
\begin{flalign}
	Z_t(x_t) &= \sum_{\tilde{x}}q_t(\tilde{x})\exp\left(-\beta[\EE(c_t + \nu_{t + 1}| x_t, \tilde{x}))]\right). \label{eq:code given state apdx}
\end{flalign}

\subsection{Proof of \cref{thm:lg sln}}
\label{apdx:lg sln}

Again, the first step of this proof is to establish the structure of the value function $\nu_t(x_t)$. By the previous theorem, the value function at the terminal time step is $\nu_T(x_T) = c_T(x_T)$. This function is the quadratic $\nu_T(x_T) = \transp{x}_T P_T x_T + \transp{b}_{T} x_T + \const$ where:
\begin{align}
	P_{T} &= Q_{T + 1},& b_{T} &= -Q_{T} g_{T}.
\end{align}
Define $G_t \coloneqq \transp{C_t} \inv{(C_t \Sigma_{x_t} C_t + \Sigma_{\eta_t}})C_t$. Through use of the fact that the KL-divergence is the quadratic
\begin{flalign}
	\KL\left(q(\tilde{x}|x_t)||q_t(\tilde{x})\right) &= \frac{1}{2}\transp{(\bar{x}_t - x_t)} G_t (\bar{x}_t - x_t) + \const,\nonumber
\end{flalign}
recursively plugging in $\nu_{t + 1}(x_{t + 1})$ into \cref{eq:value func apdx} shows that $\nu_t(x_t) = \transp{x}_t P_T x_t + \transp{b}_{T} x_t + \const$ with
\begin{align}
	P_{t + 1} &= Q_t + \beta^{-1}G_t + \transp{C}_t \transp{K}_t R_t K_t C_t\nonumber\\
	&+ \transp{(A_t + B_t K_t C_t)}P_{t + 1}(A_t + B_t K_t C_t),\\[10pt]
	b_t &= \transp{(A_t + B_t K_t C_t)} P_{t + 1} B_t K_t a_t -Q_t g_t - \inv{\beta} G_t \bar{x}_t \nonumber\\
	 &+ \transp{C}_t \transp{K}_t R_t K_t a_t + \transp{(A_t + B_t K_t C_t)}b_{t + 1}.
\end{align}

Next, the logarithm of \cref{eq:code given state apdx} is the quadratic
\begin{flalign}
\log q_t(\tilde{x}_t&|x_t) = -\frac{1}{2}\transp{\tilde{x}}_t\left(\inv{\Sigma}_{\tilde{x}_t} + \beta \transp{K}_t  (\transp{B}_t P_{t + 1} B_t + R_t) K_t\right)\tilde{x}_t \nonumber\\
	- &\big(\beta \transp{K}_t \transp{B}_t (b_{t + 1} + P_{t + 1} B_t h_t + P_{t + 1} A_t x_t)\nonumber\\
	- &\inv{\Sigma}_{\tilde{x}_t|x_t}\bar{\tilde{x}}_t + \beta \transp{K}_t R_t (h_t - w_t)\big)^{\mathrm{T}}\tilde{x}_t + \const	
\end{flalign}
Since the logarithm of $q_t(\tilde{x}_t|x_t)$ is quadratic, $q_t(\tilde{x}_t|x_t)$ is a Gaussian distribution with mean and covariance:
\begin{flalign}
	\mu_{\tilde{x}|x_t} =\ &\beta \transp{K}_t \transp{B}_t (b_{t + 1} + P_{t + 1} B_t h_t + P_{t + 1} A_t x_t)\nonumber\\
	&-\inv{\Sigma}_{\tilde{x}_t|x_t}\bar{\tilde{x}}_t + \beta \transp{K}_t R_t (h_t - w_t),\\
	\Sigma_{\tilde{x}|x_t} =\ &\inv{\Sigma}_{\tilde{x}_t} + \beta \transp{K}_t  (\transp{B}_t P_{t + 1} B_t + R_t) K_t.
\end{flalign}
Since $p_t(x_t)$ and $q_t(\tilde{x}_t | x_t)$ are related via the linear-Gaussian $\tilde{x}_t = C_t x_t + a_t + \eta_t$ where $\eta_t \sim N(0, \Sigma_{\eta_t})$, it is necessary that
\begin{flalign}
	C_t =\ &-\beta \Sigma_{\eta_t}\transp{K}_t \transp{B}_t P_{t + 1} A_t,\\[10pt]
	a_t =\ &-\Sigma_{\eta_t}\big(\beta \transp{K}_t \transp{B}_t (b_{t + 1} + P_{t + 1} B_t h_t)\nonumber\\
	 &+ \beta \transp{K}_t R_t (h_t - w_t) - \inv{\Sigma}_{\tilde{x}_t}(C_t \bar{x}_t + a_t)\big),\nonumber\\[10pt]
	\Sigma_{\eta_t} &= \inv{\left(\inv{\Sigma}_{\tilde{x}_t} + \beta \transp{K}_t  (\transp{B}_t P_{t + 1} B_t + R_t) K_t\right)}.\nonumber	
\end{flalign}

\subsection{Proof of \cref{thm:kalman}}
\label{apdx:kalman}

The distribution of the conditional random variable $x_t | \tilde{x}_t \sim N(\mu_{x_t | \tilde{x}_t}, \Sigma_{x_t | \tilde{x}})$ is the Gaussian distribution given by the minimum mean square error estimate (MMSE) estimate of $x_t$ given $\tilde{x}_t$, i.e.
\begin{flalign}
	\mu_{x_t | \tilde{x}_t} &= \bar{x}_t + \Sigma_{x_t} \transp{C}_t \inv{\Sigma}_{\tilde{x}_t}	(\tilde{x}_t - C_t \bar{x}_t - a_t),\\
	\Sigma_{x_t | \tilde{x}_t} &= \Sigma_{x_t} - \Sigma_{x_t} \transp{C}_t \inv{\Sigma}_{\tilde{x}_t}C_t \Sigma_{x_t}.\nonumber
\end{flalign}
First, we compute the measurement model relating $y_t$ and $\tilde{x}_t$. Since $y_t = D_t x_t + \omega_t$ where $\omega_t \sim N(0, \Sigma_{\omega_t})$, we have $y_t|\tilde{x}_t \sim N(\mu_{y_t | \tilde{x}_t}, \Sigma_{y_t|\tilde{x}_t})$ with
\begin{flalign}
	\mu_{y_t | \tilde{x}_t} &=  D_t \bar{x}_t + D_t \Sigma_{x_t} \transp{C}_t \inv{\Sigma}_{\tilde{x}_t}(\tilde{x}_t - \bar{\tilde{x}}_t),\\
	\Sigma_{y_t | \tilde{x}_t} &= D_t \Sigma_{y_t | \tilde{x}_t} \transp{D}_t - D_t \Sigma_{x_t} \transp{C}_t \inv{\Sigma}_{\tilde{x}_t} C_t \Sigma_{x_t} \transp{D}_t + \Sigma_{\omega_t},\nonumber
\end{flalign}
The affine change of variables
\begin{align}
\tilde{D}_t &\coloneqq D_t \Sigma_{x_t} \transp{C}_t \inv{\Sigma}_{\tilde{x}_t},& \tilde{y}_t &\coloneqq y_t - D_t \bar{x}_t + \tilde{D}_t \bar{\tilde{x}}_t,
\end{align}
defines a linear-Gaussian output in the common form:
\begin{align}
	\tilde{y}_t &= \tilde{D}_t \tilde{x}_t + \tilde{\omega}_t, & \tilde{\omega}_t &\sim N(0, \Sigma_{y_t | \tilde{x}_t}).\label{eq:trv meas}
\end{align}

Now considering the process model, the distribution $x_{t + 1} | \tilde{x}_t \sim N(\mu_{x_{t + 1} | \tilde{x}_t}, \Sigma_{x_{t + 1} | \tilde{x}})$ is given by
\begin{flalign}
	\mu_{x_{t + 1} | \tilde{x}_t} &= A_t \mu_{x_{t} | \tilde{x}_t} + B_t u_t,\\
	&= A_t \bar{x}_t + A_t \Sigma_{x_t} \transp{C}_t \inv{\Sigma}_{\tilde{x}_t}(\tilde{x}_t - \bar{\tilde{x}}_t) + B_t u_t,	\vspace{5pt}\nonumber\\[10pt]
	\Sigma_{x_{t + 1}|\tilde{x}_t,u_t} &= A_t \Sigma_{x_t | \tilde{x}_t} \transp{A}_t + \Sigma_{\epsilon_t},\nonumber
\end{flalign}
and
\begin{flalign}
	\mu_{\tilde{x}_{t + 1} | \tilde{x}_t} &= C_{t + 1} A_t \mu_{x_{t} | \tilde{x}_t} + C_{t + 1} B_t u_t,\\
	\Sigma_{\tilde{x}_{t + 1}| \tilde{x}_t,u_t} &= C_{t + 1} \Sigma_{x_{t + 1} | \tilde{x}_t} \transp{C}_{t + 1} + \Sigma_{\eta_{t + 1}}.\nonumber
\end{flalign}
This distribution can be written in the common form
\begin{flalign}
	\tilde{x}_{t + 1} &= \tilde{A}_t \tilde{x}_t + \tilde{B}_t u_t + \tilde{r}_t + \tilde{\epsilon}_t	, \label{eq:trv proc}
\end{flalign}
where
\begin{flalign}
	\tilde{A}_t &\coloneqq C_{t + 1} A_t \Sigma_{x_t} \transp{C}_t \inv{\Sigma}_{\tilde{x}_t}, & \tilde{B}_t &\coloneqq C_{t + 1} B_t,\\
	\tilde{r}_t &\coloneqq -\tilde{A}_t \bar{\tilde{x}}_t + C_{t + 1} A_t \bar{x}_t + a_{t + 1}, & \tilde{\epsilon}_t &\sim N\left(0, \Sigma_{\tilde{x}_t|\tilde{x}_t, u_t}\right),\nonumber
\end{flalign}

The equations \cref{eq:trv meas} and \cref{eq:trv proc} constitute a LG system with $\tilde{x}_0 \sim N(C_0 x_0 + d_0, C_0 \Sigma_{x_0} \transp{C}_0 + \Sigma_{\eta_0})$. Despite the statistical properties of this system's noise being linked through common parameters, e.g. $\Sigma_{\tilde{x}_t}$, they are independent random variables --- given any subset of noise variables, the distribution of the remaining noise variables does not change. Since this LG system describes the evolution of $q_t(\tilde{x}_t)$ in time, the Bayesian updates for a filter on $\tilde{x}_t$ are given by the standard Kalman filter on this induced LG system.
}{}

\bibliographystyle{IEEEtran}
\bibliography{irom}

\end{document}